\def\Dfn#1{{\sf #1}}
\def\1#1{\operatorname{\bf 1}_k}
\def\pink{\pi_{n,k}}
\def\S#1{\mathcal{S}_{#1}}
\def\SP#1{\mathfrak{S}_{#1}}
\def\RP{\mathcal{RP}}
\def\del{\operatorname{del}}
\def\l{\operatorname{\ell}}
\def\link{\operatorname{link}}
\def\Cat{\operatorname{Cat}}
\def\Dn{\Delta_n}
\def\Dnk{\Delta_{n,k}}
\def\textcross{
	\begin{minipage}{13pt}
		\begin{tikzpicture}[scale=1]
			\cpipedream{0.4}{(0,0)}{0/0/black/black}
		\end{tikzpicture}
	\end{minipage}
}
\def\textelbow{
	\begin{minipage}{13pt}
		\begin{tikzpicture}[scale=1]
			\tpipedream{0.4}{(0,0)}{0/0/black/black}
		\end{tikzpicture}
	\end{minipage}
}
\newcommand{\ngon}[8]{ 

	\foreach \t in {1,...,#1} {
		\coordinate (#7\t) at ($#2+(#8-\t*360/#1:#3)$);
	}

	\foreach \x/\y/\z in {#4}{
		\draw[\z,shorten <=#5pt, shorten >=#5pt] {(#7\x)--(#7\y)};
	}

	\setcounter{intege}{1}
	\pgfmathsetcounter{intege}{1}
	\foreach \object in {#6}{
		\node[inner sep=0pt] at (#7\theintege) {\object};
		\pgfmathsetcounter{intege}{\theintege+1}
		\setcounter{intege}{\theintege}
	}
}
\newcommand{\tikzbox}[8]{ 
  \coordinate (A) at #2;
  \coordinate (B) at ($ (A) + (#1,0) $);
  \coordinate (C) at ($ (A) + (0,#1) $);
  \coordinate (D) at ($ (A) + (#1,#1)$);
  \coordinate (E) at ($ (A) + (#1/2,#1/2) $);

  \draw[fill=#7] (A) rectangle (D);

  \draw[color=#3] (A) -- (B);
  \draw[color=#6] (A) -- (C);
  \draw[color=#4] (B) -- (D);
  \draw[color=#5] (C) -- (D);
  
  \node at (E) []{#8};
}
\newcommand{\blackbox}[3]{ 
  \tikzbox{#1}{#2}{black}{black}{black}{black}{white}{#3}
}
\newcommand{\boxcollection}[3]{ 
  \coordinate (X) at #2;
	
  \foreach \x/\y/\object in {#3} {
		\blackbox{#1}{($ (X) + (#1 * \x, - #1 * \y) $)}{\object};
	}
}
\newcommand{\content}[3]{ 
  \coordinate (C) at #2;
	
  \foreach \x/\y/\object in {#3} {
		\node at ($ (C) + ( #1 * \x, -#1 * \y ) + ( #1 / 2, -#1 / 2 )$) {\object};
	}
}
\newcommand{\tpipedream}[3]{ 
  \coordinate (P) at #2;
	
  \foreach \x/\y/\a/\b in {#3} {
		\coordinate (P1) at ($ (P) + ( #1 * \x , -#1 * \y ) + ( 0      , #1 / 2 ) $);
	  \coordinate (P2) at ($ (P) + ( #1 * \x , -#1 * \y ) + ( #1     , #1 / 2 ) $);
	  \coordinate (P3) at ($ (P) + ( #1 * \x , -#1 * \y ) + ( #1 / 2 , #1     ) $);
	  \coordinate (P4) at ($ (P) + ( #1 * \x , -#1 * \y ) + ( #1 / 2 , 0 ) $);
	  \coordinate (P5) at ($ (P) + ( #1 * \x , -#1 * \y ) + ( #1 / 2 , #1 / 2 ) $);
	  \draw[rounded corners=4, color=\a, thick] (P1) -- (P5) -- (P3);
	  \draw[rounded corners=4, color=\b, thick] (P4) -- (P5) -- (P2);
	}

}
\newcommand{\cpipedream}[3]{ 
  \coordinate (P) at #2;
	
  \foreach \x/\y/\a/\b in {#3} {
		\coordinate (P1) at ($ (P) + ( #1 * \x , -#1 * \y ) + ( 0      , #1 / 2 ) $);
	  \coordinate (P2) at ($ (P) + ( #1 * \x , -#1 * \y ) + ( #1     , #1 / 2 ) $);
	  \coordinate (P3) at ($ (P) + ( #1 * \x , -#1 * \y ) + ( #1 / 2 , #1     ) $);
	  \coordinate (P4) at ($ (P) + ( #1 * \x , -#1 * \y ) + ( #1 / 2 , 0 ) $);
	  \coordinate (P5) at ($ (P) + ( #1 * \x , -#1 * \y ) + ( #1 / 2 , #1 / 2 ) $);
	  \draw[rounded corners=0.2, color=\a, thick] (P1) -- (P5) -- (P3);
	  \draw[rounded corners=0.2, color=\b, thick] (P4) -- (P5) -- (P2);
	}

}
\newcommand{\latticepath}[4]{ 
  \coordinate (L) at #2;
	
  \foreach \x/\y/\a in {#4} {
  	\coordinate (L1) at ($ (L) + ( #1 * \x , #1 * \y ) $);
	  \draw[color=\a, #3] (L) -- (L1);
	  \coordinate (L) at (L1);
	}

}
\definecolor{grey}{rgb}{.7 , .7 , .7}
\newtheorem{theorem}{Theorem}[section]
\newtheorem{statement}[theorem]{Statement}
\newtheorem{corollary}[theorem]{Corollary}
\theoremstyle{definition}
\newtheorem{example}[theorem]{Example}
\newtheorem{remark}[theorem]{Remark}	
\begin{document}
	\title{A new Perspective on $k$-Triangulations}
	\author{Christian Stump}
  \address{LaCIM, Universit\'e du Qu\'ebec \`a Montr\'eal, Montr\'eal (Qu\'ebec), Canada}
  \email{christian.stump@lacim.ca}
	\urladdr{http://homepage.univie.ac.at/christian.stump/}
	\subjclass[2000]{Primary 05E45; Secondary 05A05, 05E05}
	\date{\today}
	\keywords{$k$-triangulation, triangulated sphere, enumerative combinatorics, pipe dream, Schubert polynomial}
	\thanks{
		The author would like to thank Christian Krattenthaler, Martin Rubey and Luis Serrano for various discussions.
	}
	\begin{abstract}
		We connect $k$-triangulations of a convex $n$-gon to the theory of Schubert polynomials. We use this connection to prove that the simplicial complex with $k$-triangulations as facets is a vertex-decomposable triangulated sphere, and we give a new proof of the determinantal formula for the number of $k$-triangulations.
	\end{abstract}
	\maketitle

	\section{Introduction}
		Let $\Dn$ be the simplicial complex with vertices being diagonals in a convex $n$-gon and facets being \Dfn{triangulations} (i.e., maximal subsets of diagonals which are mutually noncrossing). It is well known that $\Dn$ is a triangulated sphere and moreover that it is the boundary complex of the \emph{dual associahedron}, see e.g. \cite{Lee1989}.
		
		This construction can be generalized using an additional positive integer $k$ with $2k+1 \leq n$. Define a \Dfn{$(k+1)$-crossing} to be a set of $k+1$ diagonals which are mutually crossing. The simplicial complex $\Dnk$ has vertex set
		$$\Gamma_{n,k} := \big\{ \overline{ij} : k < | i-j | < n-k \big\}.$$
		Its facets are given by \Dfn{$k$-triangulations}, i.e., maximal subsets of diagonals in $\Gamma_{n,k}$ which do not contain a $(k+1)$-crossing. The reason for restricting the set of diagonals is simply that all other diagonals cannot be part of a $(k+1)$-crossing and thus the resulting simplicial complex would be a join of $\Dnk$ and an $nk$-simplex. The complex $\Dnk$ has been studied by several authors, see e.g. \cite{DKM2002, J2005, JW2007, Kra2006, Nak2000, Rub2006}; a very interesting survey of what is known about $k$-triangulations can be found in \cite{PS2009}.

		\begin{theorem}[Dress, Koolen, Moulton \cite{DKM2002}, Nakamigawa \cite{Nak2000}]\label{th:dim}
			Every $k$-triangulation contains $k(n-2k-1)$ diagonals. In particular, $\Dnk$ is pure of dimension $k(n-2k-1)-1$.
		\end{theorem}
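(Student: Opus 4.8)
The plan is to pin down the exact cardinality of an arbitrary $k$-triangulation and to deduce purity as an immediate consequence; the dimension statement then follows because a simplicial complex all of whose facets have $m$ vertices is pure of dimension $m-1$. I would prove the cardinality statement by induction on $n$, the base case being $n=2k+1$, where the defining inequality $k<|i-j|<n-k$ is vacuous, so that $\Gamma_{n,k}=\emptyset$ and the unique (empty) $k$-triangulation has $0=k(n-2k-1)$ diagonals.

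For the inductive step I would pass from the $n$-gon to the $(n-1)$-gon by deleting a single vertex. The main device is an \emph{ear lemma}: every $k$-triangulation $T$ of the $n$-gon (with $n>2k+1$) has a vertex $v$ such that, after deleting $v$, relabelling the remaining $n-1$ vertices cyclically, and discarding the diagonals that have become $k$-irrelevant (those whose length drops to at most $k$ in the smaller polygon), the resulting set $T'$ is again a $k$-triangulation, now of the $(n-1)$-gon, with exactly $k$ fewer diagonals. Granting this, induction gives $|T|=|T'|+k=k((n-1)-2k-1)+k=k(n-2k-1)$, a value independent of $T$; hence every facet of $\Dnk$ has the same number $k(n-2k-1)$ of vertices, so $\Dnk$ is pure of dimension $k(n-2k-1)-1$.

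The hard part will be the ear lemma, and it splits into two difficulties. First, one must \emph{locate} a good vertex: the right candidate should be the analogue of an ear of an ordinary triangulation, say a vertex incident to exactly the $k$ shortest relevant diagonals at it, and I would force its existence by an extremal choice (for instance, minimizing a suitable local statistic) together with the observation that a $(k+1)$-crossing cannot be supported too close to the boundary. Second, one must check that the contraction \emph{preserves maximality}: that $T'$ has no $(k+1)$-crossing (clear, since $T'$ is a subset of a relabelling of $T$) and, in the delicate direction, that no diagonal can be added to $T'$ without creating one; here I would argue that a diagonal addable to $T'$ would, lifted back through $v$, yield a diagonal addable to $T$, contradicting maximality of $T$. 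Verifying that exactly $k$ diagonals are lost — neither more nor fewer, counting both the diagonals incident to $v$ and the length-$(k+1)$ diagonals having $v$ on their short side — is where the precise combinatorics of crossings near $v$ must be controlled, and I expect this bookkeeping to be the true crux.

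As an independent check on the value $k(n-2k-1)$, and to secure the upper bound $|T|\le k(n-2k-1)$ without the ear lemma, I would invoke the Capoyleas--Pach bound: among all chords of a convex $n$-gon, a family with no $(k+1)$ pairwise crossing chords has at most $k(2n-2k-1)$ elements. Since every chord in a $(k+1)$-crossing must be crossed by $k$ others and hence has length at least $k+1$, no chord of length at most $k$ ever belongs to a $(k+1)$-crossing; thus all $nk$ such short chords lie in every maximal family, and removing them leaves at most $k(2n-2k-1)-nk=k(n-2k-1)$ diagonals inside $\Gamma_{n,k}$, matching the claimed count. This settles $\dim\Dnk\le k(n-2k-1)-1$ cleanly, and it isolates the real content of the ear lemma as the matching lower bound, namely that no facet can be smaller — which is exactly the purity assertion.
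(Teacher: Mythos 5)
Your outer scaffolding is sound: the base case $n=2k+1$ is handled correctly, the induction arithmetic $|T'|+k=k(n-2k-1)$ is right, and the Capoyleas--Pach argument is a valid derivation of the upper bound (a chord of length at most $k$ cannot lie in a $(k+1)$-crossing, since the $k$ other chords would need pairwise distinct endpoints among the at most $k-1$ vertices on its short side, so $|T|+nk\le k(2n-2k-1)$ gives $|T|\le k(n-2k-1)$). But the proposal has a genuine gap, and you have named it yourself: the ear lemma is not proved, and it \emph{is} the theorem. Nothing in the proposal identifies the good vertex $v$ --- ``an extremal choice, minimizing a suitable local statistic'' is a hope, not an argument --- and the choice genuinely matters: for $k=1$ and the fan triangulation of a hexagon, deleting the apex destroys all three diagonals, so the lemma is simply false for an arbitrary vertex. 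Moreover, the one step you do sketch, preservation of maximality, is logically incomplete as stated. If $d'$ is addable to $T'$, maximality of $T$ gives a $(k+1)$-crossing in $T\cup\{d\}$ for the lifted diagonal $d$; but that crossing may involve diagonals incident to $v$, or diagonals that become irrelevant after contraction, and such a crossing does not descend to one in $T'\cup\{d'\}$. So no contradiction follows --- addability in the smaller polygon does not imply addability in the larger one precisely because $T$ contains the diagonals you discarded. Controlling this, i.e., choosing $v$ so that the discarded diagonals cannot participate in any such witness, is the entire content of the published proofs, and it is absent here.

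For the comparison you were asked about: this paper does not prove Theorem~\ref{th:dim} at all; it quotes it from Dress--Koolen--Moulton and Nakamigawa, and in fact \emph{uses} it as an ingredient in the proof of Theorem~\ref{th:bijection} (to know that any $k(n-2k-1)$ diagonals in $\Gamma_{n,k}$ without a $(k+1)$-crossing already form a $k$-triangulation). So your proposal is an attempt at the external result, not an alternative to an argument in the paper. It is worth noting that purity of $\Dnk$ \emph{would} follow a posteriori from Theorem~\ref{th:main} together with Knutson--Miller's Theorem~\ref{th:shellable}, since $\ell(Q'_{n,k})-\ell(\pink)=\binom{n-k}{2}-\binom{k+1}{2}-\binom{n-2k}{2}=k(n-2k-1)$; but within the logic of this paper that route is circular, exactly because Theorem~\ref{th:dim} feeds into the proof of the main theorem. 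Your vertex-deletion strategy is essentially Nakamigawa's; to complete it you would need to supply his analysis of which vertex to contract and why exactly $k$ diagonals are lost.
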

		\begin{theorem}[Jonsson \cite{J2005}, Krattenthaler \cite{Kra2006}]\label{th:number}
			The number of facets in $\Dnk$ (which is the number of $k$-triangulations of a convex $n$-gon) is given by
			$$\det \begin{pmatrix}
												\Cat_{n-2} & \cdots & \Cat_{n-k-1} \\
			                	\vdots & \ddots & \vdots \\
												\Cat_{n-k-1} & \cdots & \Cat_{n-2k}
			               \end{pmatrix}
											= \prod_{1 \leq i \leq j < n-2k} \frac{i+j+2k}{i+j},
			$$
			where $\Cat_m = \frac{1}{m+1}\binom{2m}{m}$ denotes the $m$-th \Dfn{Catalan number}.
		\end{theorem}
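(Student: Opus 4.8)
The plan is to route the count through the pipe-dream model. Granting the bijection between $k$-triangulations and reduced pipe dreams (RC-graphs) of an explicit permutation $\pink$ that forms the core of this paper, the number of facets of $\Dnk$ equals the number of reduced pipe dreams of $\pink$, which in turn is the principal specialization $\mathfrak{S}_{\pink}(1,\dots,1)$ of the associated Schubert polynomial (by the monomial pipe-dream expansion of Schubert polynomials). The theorem thus splits into two tasks: (a) evaluate this specialization as the displayed Hankel determinant of Catalan numbers, and (b) evaluate that determinant as the product.

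For (a), I would first make $\pink$ explicit and check that it is \emph{vexillary} (that is, $2143$-avoiding), with its Rothe diagram a staircase-type shape. For vexillary permutations the Schubert polynomial is a flagged Schur polynomial $s_{\lambda}$, with shape $\lambda$ and flag read off from the essential set of the Rothe diagram, so the principal specialization counts flagged semistandard tableaux of shape $\lambda$. Encoding the rows of such tableaux as lattice paths and applying the Lindstr\"om--Gessel--Viennot lemma, the count becomes a $k\times k$ determinant; the flag is precisely the one forcing each path to be a Dyck path, so the individual path counts collapse to Catalan numbers and one lands on the matrix $(\Cat_{n-i-j})_{1\le i,j\le k}$. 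Equivalently, one can start from the flagged Jacobi--Trudi determinant of complete homogeneous symmetric polynomials and perform the standard column reduction rewriting the binomial specializations as Catalan numbers.

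For (b), I would read the Hankel determinant back through Lindstr\"om--Gessel--Viennot as the number of families of $k$ pairwise non-crossing Dyck paths with suitably staggered endpoints (Jonsson's ``$k$-fans'' of Dyck paths). This family count then admits a closed product form, which I would obtain by a weight-preserving bijection to rhombus tilings (equivalently, plane partitions) of a trapezoidal region together with the known product formula for those; matching parameters yields $\prod_{1\le i\le j<n-2k}\frac{i+j+2k}{i+j}$. Alternatively, the Hankel determinant of Catalan numbers with this shift is a classical evaluation provable directly by Dodgson condensation or by the associated orthogonal-polynomial and continued-fraction machinery.

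The main obstacle, assuming the pipe-dream bijection is in hand, is step (a): pinning down the exact shape and, crucially, the flag of $\pink$ so that the specialization produces precisely the \emph{Catalan} Hankel matrix rather than a generic binomial determinant. The decisive point is that the flag must cut the tableau entries down to ballot sequences; verifying this flag condition, and then carrying out the LGV encoding cleanly, is where the real work lies. The concluding determinant evaluation in (b) is technically heavy but standard, and I would lean on existing determinant-evaluation technology rather than reprove it from scratch.
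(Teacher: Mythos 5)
Your proposal follows essentially the same route as the paper: both count the facets via the bijection with reduced pipe dreams for $\pink$, identify this count with the principal specialization $\SP{\pink}(1,\ldots,1)$ through the monomial expansion of Schubert polynomials, and evaluate that specialization using the fact that $\pink$ is vexillary, so that $\SP{\pink}$ is a flagged Schur function whose specialization is computed by the Lindstr\"om--Gessel--Viennot lemma. The only difference is packaging: the paper outsources the evaluation you plan to carry out by hand (citing \cite{FK1997} and \cite{Man2001}), and both treatments likewise delegate the final determinant-to-product identity to known determinant evaluations.
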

		The first proof of Theorem~\ref{th:number} by J.~Jonsson is rather involved and uses multiple results on moon polyominos whereas the second by C.~Krattenthaler uses growth diagrams and proves that the number of $k$-triangulations equals the number of certain fans of nonintersecting Dyck paths. The number of such fans can be counted using the Lindström-Gessel-Viennot Lemma on families of nonintersecting lattice paths, see for example Sections~2.6.1 and 2.6.2. in \cite{Man2001}. This gives a natural explanation of the determinantal formula.
		
		For a simplicial complex $\Delta$ and a face $F \in \Delta$, define
		\begin{itemize}
			\item the \Dfn{deletion} of $F$ from $\Delta$ by $\del(\Delta,F) := \{ G \in \Delta : G \cap F = \emptyset \}$,
			\item the \Dfn{link} of $F$ in $\Delta$ by $\link(\Delta,F) := \{ G \in \Delta : G \cap F = \emptyset, G \cup F \in \Delta \}$.
		\end{itemize}
		Moreover, $\Delta$ is called \Dfn{vertex-decomposable} if $\Delta$ is pure and either (1) $\Delta = \{ \emptyset \}$ or (2) $\del(\Delta,v)$ and $\link(\Delta,v)$ are both vertex-decomposable for some vertex $v \in \Delta$. Vertex-decomposability was introduced by L.J.~Billera and J.S.~Provan in \cite{BP1979}, where they moreover showed that vertex-decomposability implies shellability.

		The following fact about the simplicial complex $\Dnk$ was proved by J.~Jonsson in an unpublished manuscript communicated in \cite{J2003}. He informed me that there exists as well an unpublished manuscript by A.~Dress, S.~Grünewald and V.~Moulton containing a proof.
		\begin{statement}[Jonsson]\label{th:vertex-decomposable}
			$\Dnk$ is a vertex-decomposable triangulated sphere.
		\end{statement}

		In this paper we want to connect $k$-triangulations in a surprisingly simple way to the theory of Schubert polynomials and thereby present a proof of Statement~\ref{th:vertex-decomposable} and as well another viewpoint on C.~Krattenthaler's proof of Theorem~\ref{th:number}.

		To this end define, for any permutation $\pi \in \S{n}$ and any word $Q$ in the simple generators $s_i := (i,i+1) \in \S{n}$, a simplicial complex which was introduced by A.~Knutson and E.~Miller in the context of Schubert polynomials in \cite[Definition 1.8.1]{KM2005} and further studied in \cite{KM2004}. The \Dfn{subword complex} $\Delta(Q, \pi)$ is defined to be the simplicial complex with vertices $v_i$ being labelled by $w_i$ for every letter $w_i \in Q$. Note that $v_i \neq v_j$ for $i \neq j$, even if the letters $w_i = w_j$ coincide. A subword of $Q$ forms a facet of $\Delta(Q,\pi)$ if and only if its complement is a reduced word for $\pi$.
		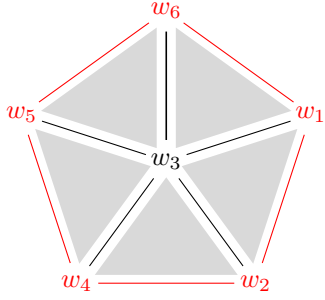
\begin{figure}
			\centering
			\begin{tikzpicture}[scale=1]
				\coordinate (allo) at (0,0);
				\newcounter{intege}
				\ngon{5}{(allo)}{2}
						{1/2/red,2/3/red,3/4/red,4/5/red,5/1/red}
					{8}{$\color{red} w_1$,$\color{red} w_2$,$\color{red} w_4$,$\color{red} w_5$,$\color{red} w_6$}{first}{90}
				\node[inner sep=0pt] at (allo) {$\color{black} w_3$};
				\draw[black,shorten <=8pt, shorten >=8pt] {(first1)--(allo)};
				\draw[black,shorten <=8pt, shorten >=8pt] {(first2)--(allo)};
				\draw[black,shorten <=8pt, shorten >=8pt] {(first3)--(allo)};
				\draw[black,shorten <=8pt, shorten >=8pt] {(first4)--(allo)};
				\draw[black,semithick,shorten <=8pt, shorten >=8pt] {(first5)--(allo)};

				\coordinate (x1) at ($0.33*(first1) + 0.33*(first2)$);
				\coordinate (y1) at ($0.2*(x1) + 0.8*(first1)$);
				\coordinate (y2) at ($0.2*(x1) + 0.8*(first2)$);
				\coordinate (y3) at ($0.2*(x1)$);
				\fill[fill opacity=0.15] (y1) -- (y2) -- (y3);

				\coordinate (x1) at ($0.33*(first2) + 0.33*(first3)$);
				\coordinate (y1) at ($0.2*(x1) + 0.8*(first2)$);
				\coordinate (y2) at ($0.2*(x1) + 0.8*(first3)$);
				\coordinate (y3) at ($0.2*(x1)$);
				\fill[fill opacity=0.15] (y1) -- (y2) -- (y3);

				\coordinate (x1) at ($0.33*(first3) + 0.33*(first4)$);
				\coordinate (y1) at ($0.2*(x1) + 0.8*(first3)$);
				\coordinate (y2) at ($0.2*(x1) + 0.8*(first4)$);
				\coordinate (y3) at ($0.2*(x1)$);
				\fill[fill opacity=0.15] (y1) -- (y2) -- (y3);

				\coordinate (x1) at ($0.33*(first4) + 0.33*(first5)$);
				\coordinate (y1) at ($0.2*(x1) + 0.8*(first4)$);
				\coordinate (y2) at ($0.2*(x1) + 0.8*(first5)$);
				\coordinate (y3) at ($0.2*(x1)$);
				\fill[fill opacity=0.15] (y1) -- (y2) -- (y3);

				\coordinate (x1) at ($0.33*(first5) + 0.33*(first1)$);
				\coordinate (y1) at ($0.2*(x1) + 0.8*(first5)$);
				\coordinate (y2) at ($0.2*(x1) + 0.8*(first1)$);
				\coordinate (y3) at ($0.2*(x1)$);
				\fill[fill opacity=0.15] (y1) -- (y2) -- (y3);

			\end{tikzpicture}
			\caption{$\Delta(Q,\pi)$ as the join of $\Delta(Q',\pi)$ with the centered point $w_3$.}
			\label{fig:sachertorte}
		\end{figure}
		\begin{example}[following Example 1.8.2 in \cite{KM2005}]\label{ex:pentagon}
			Let
			$$Q' = w_1 w_2 w_4 w_5 w_6 := s_3 s_2 s_3 s_2 s_3$$
			and let $\pi=[1,4,3,2]$. Consider $\Delta( Q', \pi )$: as $\pi$ has the two reduced expressions $s_2 s_3 s_2 = s_3 s_2 s_3$, the reduced words in $Q'$ for $\pi$ are
			$$ w_1 w_2 w_4, w_2 w_4 w_5, w_4 w_5 w_6, w_1 w_5 w_6, \text{ and } w_1 w_2 w_6.$$
			Therefore, $\Delta( Q', \pi )$ is a pentagon as shown in {\color{red}red} in Figure~\ref{fig:sachertorte}. If we instead consider
			$$Q = w_1 w_2 w_3 w_4 w_5 w_6 = s_3 s_2 s_1 s_3 s_2 s_3,$$
			we obviously obtain $\Delta(Q,\pi)$ as shown in Figure~\ref{fig:sachertorte} to be a $5$-piece cake (which is the join of $\Delta(Q',\pi)$ with the centered point $w_3 = s_1)$.
		\end{example}
		A.~Knutson and E.~Miller proved the following two beautiful theorems concerning subword complexes in \cite[Theorem~2.5, Corollary~3.8]{KM2004}.
		\begin{theorem}[Knutson, Miller]\label{th:shellable}
			Any subword complex $\Delta(Q,\pi)$ is pure of dimension $\l(Q) - \l(\pi) - 1$. It is moreover vertex-decomposable, and thus shellable.
		\end{theorem}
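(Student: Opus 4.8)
The plan is to prove the two assertions separately: the dimension count directly from the defining property of facets, and vertex-decomposability by induction on the length $\l(Q)$ of the word. First I would settle purity. By the characterization recalled above, a subword $F \subseteq Q$ is a facet of $\Delta(Q,\pi)$ exactly when its complement $Q \setminus F$ is a reduced word for $\pi$. Since every reduced word for $\pi$ has precisely $\l(\pi)$ letters, every facet has exactly $\l(Q) - \l(\pi)$ vertices; all facets thus have the same cardinality, so $\Delta(Q,\pi)$ is pure of dimension $\l(Q) - \l(\pi) - 1$.

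For vertex-decomposability I would induct on $\l(Q)$, the base case $\l(Q) = 0$ being the complex $\{\emptyset\}$ (when $\pi$ is the identity), which is vertex-decomposable by definition. For the inductive step write $Q = (w_1, \ldots, w_m)$, let $v_m$ be the last vertex, and set $s := w_m$, a simple reflection. The heart of the argument is to identify the link and the deletion of $v_m$ as subword complexes on the shorter word $Q' := (w_1, \ldots, w_{m-1})$. For the link, a face $G$ with $v_m \notin G$ satisfies $G \cup \{v_m\} \in \Delta(Q,\pi)$ iff $Q' \setminus G$ contains a reduced word for $\pi$, so $\link(\Delta(Q,\pi), v_m) = \Delta(Q',\pi)$. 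For the deletion, $G \in \del(\Delta(Q,\pi), v_m)$ means the word $(Q' \setminus G)\, s$ contains a reduced word for $\pi$; such a reduced word either avoids the trailing $s$, which places $G$ in $\Delta(Q',\pi)$, or ends in $s$, which can occur only if $\l(\pi s) < \l(\pi)$ and then forces $Q' \setminus G$ to contain a reduced word for $\pi s$, i.e. $G \in \Delta(Q', \pi s)$.

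This gives a clean dichotomy. If $s$ is not a right descent of $\pi$, i.e. $\l(\pi s) > \l(\pi)$, then no reduced word for $\pi$ ends in $s$, so both link and deletion equal $\Delta(Q',\pi)$; here $v_m$ lies in every facet and $\Delta(Q,\pi)$ is the cone over $\Delta(Q',\pi)$, which is vertex-decomposable since cones over vertex-decomposable complexes are. If $s$ is a right descent of $\pi$, then every word containing a reduced word for $\pi$ also contains one for $\pi s$, so $\Delta(Q',\pi) \subseteq \Delta(Q',\pi s)$ and the deletion equals $\Delta(Q',\pi s)$; moreover each facet $F$ of $\Delta(Q',\pi s)$ extends by the trailing $s$ to a reduced word for $\pi$ and so remains a facet of $\Delta(Q,\pi)$, so the deletion preserves dimension. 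In either case the link and deletion are subword complexes on $Q'$, hence vertex-decomposable by the inductive hypothesis, and $\Delta(Q,\pi)$ is pure by the first part; thus $\Delta(Q,\pi)$ is vertex-decomposable, and shellability follows from the result of Billera and Provan cited above.

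I expect the main obstacle to be the deletion computation, specifically the equality $\del(\Delta(Q,\pi), v_m) = \Delta(Q',\pi s)$ in the descent case. This rests on the elementary but easily mishandled fact that the permutations admitting a reduced subword of a fixed word form a lower order ideal in the Bruhat order (namely those below the Demazure product of the word), so that $\pi s < \pi$ together with $\pi$ being in this ideal forces $\pi s$ into it as well. Carefully tracking which reduced words use the trailing letter $s$, and verifying that the resulting deletion is itself a subword complex that is pure of the correct dimension with facets remaining facets of $\Delta(Q,\pi)$, is where the care is required.
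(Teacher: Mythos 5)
Your proof is correct and is essentially the paper's (i.e.\ Knutson and Miller's) argument: purity is immediate because every reduced word for $\pi$ has exactly $\ell(\pi)$ letters, and vertex-decomposability follows by induction on $\ell(Q)$ after identifying the link and deletion of an end letter of $Q$ as subword complexes on the shorter word $Q'$, with the descent dichotomy yielding either a cone or the pair $\Delta(Q',\pi)$ and $\Delta(Q',\pi s)$. The only differences are cosmetic: you split off the \emph{last} letter and use right descents where the paper splits off the \emph{first} letter and uses left descents, and you should also dispose of the degenerate case where $Q'$ contains no reduced word for $\pi$ (then $v_m$ is not actually a vertex of $\Delta(Q,\pi)$ and the ``link'' would be the void complex), in which case $\Delta(Q,\pi)$ simply coincides with its deletion $\Delta(Q',\pi s)$ and the inductive hypothesis applies to it directly.
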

		In the theorem, $\l(Q)$ is simply the number of letters in $Q$ and $\l(\pi)$ is the Coxeter length of $\pi$ which is the length of any minimal expression for $\pi$. The pureness and the dimension of $\Delta(Q,\pi)$ then follow immediately, the fact that it is vertex-decomposable is proved by showing that both the link and the deletion of the first letter in $Q$ are again subword complexes and thus vertex-decomposable by induction.

		For the second theorem, we need the notion of the Demazure product. Let $\pi \in \S{n}$, $Q$ be a (not necessarily reduced) word for $\pi$, and $Q'$ be obtained from $Q$ by adding an additional generator $s$ at the end. The \Dfn{Demazure product} $\delta$ of the empty word is defined to be the identity permutation, and
		$$ \delta(Q') =
		\begin{cases}
			\delta(Q) s &\text{ if } \ell(\pi s) > \ell(\pi), \text{ and}\\
			\delta(Q) &\text{ if } \ell(\pi s) < \ell(\pi).\\
		\end{cases}
		$$
		\begin{theorem}[Knutson, Miller]\label{th:sphere}
			The complex $\Delta(Q,\pi)$ is a triangulated sphere if $\delta(Q) = \pi$, and it is a triangulated ball otherwise.
		\end{theorem}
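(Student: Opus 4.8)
The plan is to combine the shellability already granted by Theorem~\ref{th:shellable} with the classical theorem of Danaraj and Klee that a shellable simplicial complex in which every codimension-one face lies in at most two facets is a triangulated ball or sphere, and is a sphere precisely when every such face lies in exactly two facets. Everything then reduces to a local count: for each ridge $F$ of $\Delta(Q,\pi)$ (a face of codimension one, so $|F| = \l(Q)-\l(\pi)-1$) I must determine how many facets contain $F$. Writing $D := Q\setminus F$, this is the number of letters of $D$ whose deletion leaves a reduced word for $\pi$; note $\l(D)=\l(\pi)+1$, and since $\Delta(Q,\pi)$ is pure the ridge $F$ extends to a facet, so $D$ contains a reduced word for $\pi$ and hence $\pi\le\delta(D)$ in Bruhat order.

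The combinatorial heart is the following dichotomy. Since $\pi\le\delta(D)$ and $\l(\delta(D))\le\l(D)=\l(\pi)+1$, either $\delta(D)=\pi$, or $\delta(D)$ covers $\pi$ (equivalently $D$ is itself reduced, a word for $v:=\delta(D)$ with $\l(v)=\l(\pi)+1$). I claim that in the first case exactly two deletions are reduced for $\pi$, and in the second exactly one. For the covering case I would use the standard fact that for a reduced word $D=(\tau_1,\dots,\tau_m)$ the reflections $t_j=\tau_1\cdots\tau_{j-1}\tau_j\tau_{j-1}\cdots\tau_1$ are pairwise distinct and that a reduced deletion at position $j$ represents $t_j v$; hence distinct reduced deletions give distinct covered elements, so $\pi$ arises from at most one position, while the subword property of Bruhat order ($\pi\le v$) produces at least one. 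For the case $\delta(D)=\pi$ I would run the greedy Demazure computation $u_0=e$, $u_j=u_{j-1}\ast\tau_j$: since the length rises by one at every step but one, there is a unique position $p$ with $u_p=u_{p-1}$. A deletion at any $j>p$ keeps the non-reduced prefix $\tau_1\cdots\tau_p$ and is therefore non-reduced; deleting $\tau_p$ itself gives a reduced word for $\pi$; and for $j<p$ the deletion is reduced for $\pi$ exactly when the deleted prefix-product equals $u_{p-1}\tau_p$, a fixed element, so by injectivity of $j\mapsto$ prefix-product at most one such $j$ works, while the strong exchange property supplies exactly one. This yields the count two.

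With this lemma the conclusion is immediate. Every ridge lies in at most two facets, so by Danaraj--Klee $\Delta(Q,\pi)$ is a ball or a sphere, and a ridge $F$ is a boundary ridge (in exactly one facet) precisely when $\delta(Q\setminus F)\ne\pi$. If $\delta(Q)=\pi$, then for every ridge $F$ we have $\pi\le\delta(Q\setminus F)\le\delta(Q)=\pi$, so no boundary ridge exists and $\Delta(Q,\pi)$ is a sphere. Conversely, if $\delta(Q)\ne\pi$ then $\pi<\delta(Q)$ (as $\pi\le\delta(Q)$ always), so I can choose $v$ covering $\pi$ with $v\le\delta(Q)$; the subword property gives a reduced word $R$ for $v$ inside $Q$, and $F:=Q\setminus R$ is a ridge with $\delta(Q\setminus F)=v\ne\pi$, i.e.\ a boundary ridge, so $\Delta(Q,\pi)$ is a ball.

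I expect the genuine obstacle to be the ``exactly two'' count when $\delta(D)=\pi$, and specifically the upper bound that rules out a third reduced deletion. The argument above handles it by splitting positions relative to the unique non-ascending step $p$: positions past $p$ are killed by a non-reduced prefix, and positions before $p$ are controlled by the injectivity of the reflection (prefix-product) assignment together with the strong exchange property. Making these three subcases precise, and separately checking the degenerate base cases so that the void complex and the $(-1)$-sphere $\{\emptyset\}$ are classified correctly, is where the care is needed.
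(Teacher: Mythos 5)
The first thing to note is that the paper never proves this statement: Theorem~\ref{th:sphere} is one of the two results quoted from Knutson and Miller (it is \cite[Corollary~3.8]{KM2004}) and is used as a black box in the proof of Corollary~\ref{cor:1}, so the only proof to compare yours against is the one in \cite{KM2004} itself. Your proposal is correct and is, in substance, exactly that original argument: Knutson and Miller also combine shellability (Theorem~\ref{th:shellable}) with the Danaraj--Klee theorem (a shellable complex in which every codimension-one face lies in at most two facets is a ball or a sphere, and a sphere precisely when there is no boundary ridge), and they also reduce everything to the dichotomy you isolate, namely that a word $D$ of length $\ell(\pi)+1$ containing a reduced word for $\pi$ admits exactly two one-letter deletions that are reduced words for $\pi$ if $\delta(D)=\pi$, and exactly one otherwise; their proof of this counting lemma runs on the same rails as yours (unique non-ascending position $p$ of the greedy Demazure computation, strong exchange, distinctness of the reflections attached to the positions of a reduced word). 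Two details should be tightened if you write this out in full. First, in the ball direction, ``the subword property gives a reduced word $R$ for $v$ inside $Q$'' is not literally the subword property of Bruhat order, since $Q$ need not be a reduced word for $\delta(Q)$; the statement you actually need is that $v\leq\delta(Q)$ if and only if $Q$ contains a reduced word for $v$, which is \cite[Lemma~3.4]{KM2004} --- the same lemma this paper invokes when proving Corollary~\ref{cor:1} --- and the monotonicity $\delta(D)\leq\delta(Q)$ for subwords $D$ of $Q$, used implicitly in your sphere direction, is also a consequence of it. Second, ``$\pi\leq\delta(Q)$ always'' is true only when $Q$ contains some reduced word for $\pi$, i.e.\ when $\Delta(Q,\pi)$ is not the void complex; you flag this degenerate case yourself, but it must be carried as a standing hypothesis (or disposed of by convention, as in \cite{KM2004}) rather than asserted as a fact. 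With those two repairs your argument is complete, and it recovers the Knutson--Miller proof rather than providing an alternative to it.
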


	\section{Results}
		The main theorem of this paper is the following:
		\begin{theorem}\label{th:main}
			Let
			$$Q_{n,k} := s_{n-k-1} \cdots s_1 \hspace{5pt} s_{n-k-1} \cdots s_2 \hspace{5pt} \cdots \hspace{5pt} s_{n-k-1} s_{n-k-2} \hspace{5pt} s_{n-k-1}$$
			be the \Dfn{triangulated reduced word} for $\pi_{n-k,0} := [n-k,\ldots,1] \in \S{n-k}$ and let
			$$\pink := [1,\ldots,k,n-k,n-k-1,\ldots,k+1] \in \S{n-k}.$$
			Then
			$$\Dnk = \Delta( Q'_{n,k}, \pink ),$$
			where $Q'_{n,k}$ is obtained from $Q_{n,k}$ by deleting all letters $s_i$ for $1 \leq i \leq k$.
		\end{theorem}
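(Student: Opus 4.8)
The plan is to prove the identity by matching the two complexes face by face through an explicit bijection on vertices. Arrange the letters of $Q_{n,k}$ in the staircase determined by its blocks: for $1 \le j \le i \le n-k-1$, let $(i,j)$ denote the letter $s_i$ sitting in the $j$-th block $s_{n-k-1}\cdots s_j$. Deleting the letters $s_i$ with $i \le k$ leaves exactly the positions with $k+1 \le i \le n-k-1$, and these are the vertices of $\Delta(Q'_{n,k},\pink)$. I would define
$$\beta\colon (i,j)\longmapsto \overline{j,\,n-i+j},$$
and check that $\beta$ is a bijection from the surviving positions onto $\Gamma_{n,k}$: the endpoint difference is $n-i$, which runs over $\{k+1,\dots,n-k-1\}$ exactly as $i$ does, and for a fixed difference $d=n-i$ the smaller endpoint $j$ runs over $1,\dots,n-d$, matching the $n-d$ diagonals of that length. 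As a sanity check, $|\Gamma_{n,k}|=\binom{n-k}{2}-\binom{k+1}{2}=\ell(Q'_{n,k})$, and the facet size $\ell(Q'_{n,k})-\ell(\pink)$ equals $k(n-2k-1)$, matching Theorem~\ref{th:dim}.

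Since $\beta$ identifies the two vertex sets, it remains to show that the two complexes have the same faces. The faces of $\Dnk$ are precisely the subsets of $\Gamma_{n,k}$ with no $(k+1)$-crossing, whereas a subset $F$ of letters is a face of $\Delta(Q'_{n,k},\pink)$ iff $Q'_{n,k}\setminus F$ contains a reduced word for $\pink$. I would first record that $\delta(Q'_{n,k})=\pink$: every letter of $Q'_{n,k}$ is one of $s_{k+1},\dots,s_{n-k-1}$, so the Demazure product lies in the parabolic subgroup $\langle s_{k+1},\dots,s_{n-k-1}\rangle$ whose longest element is exactly $\pink$; and the blocks indexed $k+1,\dots,n-k-1$ of $Q'_{n,k}$ already spell the triangular reduced word of that longest element, so the product is maximal. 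Because the Demazure product is monotone under passing to superwords, $Q'_{n,k}\setminus F$ contains a reduced word for $\pink$ iff $\delta(Q'_{n,k}\setminus F)=\pink$. Thus the theorem reduces to a single biconditional: for $D\subseteq\Gamma_{n,k}$, the set $D$ contains no $(k+1)$-crossing iff $\delta\bigl(Q'_{n,k}\setminus\beta(D)\bigr)=\pink$.

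The heart of the argument — and the step I expect to be the main obstacle — is this biconditional, which I would attack through the wiring diagram of $Q'_{n,k}$. Here $Q'_{n,k}$ is a non-reduced word with Demazure product $\pink$, so a facet corresponds to turning a set of crossings into elbows so that the surviving crossings form a reduced pipe dream for $\pink$, and reading $\beta$ backwards the elbow positions are exactly $\beta(D)$. The product $\delta(Q'_{n,k}\setminus\beta(D))$ fails to reach $\pink$ precisely when, after these deletions, some required inversion $\{p,q\}\subseteq\{k+1,\dots,n-k\}$ of $\pink$ can no longer be realized. The content to establish is the geometric dictionary translating ``the diagonals in $D$ contain a $(k+1)$-crossing'' into ``deleting $\beta(D)$ obstructs a required crossing''. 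Concretely, I would show that the alternating endpoint chain $a_0<\cdots<a_k<b_0<\cdots<b_k$ of $k+1$ mutually crossing diagonals maps under $\beta$ onto a staircase pattern of positions whose removal collapses two occurrences of a common generator (forcing non-reducedness), and conversely that any obstruction to an inversion after deletion can be traced back to $k+1$ mutually crossing diagonals in $D$.

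Should the local bookkeeping in the forward direction become unwieldy, an alternative is a flip-propagation argument: both $\Dnk$ (by Theorem~\ref{th:dim}) and $\Delta(Q'_{n,k},\pink)$ (by Theorem~\ref{th:shellable}) are pure of the same dimension $k(n-2k-1)-1$, $k$-triangulations are connected under diagonal flips, and facets of a subword complex are connected under single-element exchanges. It would then suffice to match one facet explicitly and to verify that a flip corresponds under $\beta$ to an exchange of facets, after which connectivity forces all facets, hence all faces, to coincide. A convenient common facet is the one whose complementary reduced word is given by the blocks $k+1,\dots,n-k-1$: its face is $\beta$ of the set of diagonals with smaller endpoint in $\{1,\dots,k\}$, and this set is crossing-free for the trivial reason that a $(k+1)$-crossing would need $k+1$ distinct smaller endpoints in $\{1,\dots,k\}$. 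Either way, once the biconditional is proved the two complexes are equal; as a bonus, $\delta(Q'_{n,k})=\pink$ together with Theorem~\ref{th:sphere} shows the complex is a triangulated sphere, recovering Statement~\ref{th:vertex-decomposable} through Theorem~\ref{th:shellable}.
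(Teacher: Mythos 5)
Your setup is correct and, up to reflecting the $n$-gon via $x \mapsto n+1-x$, it is the same letters-to-boxes-to-diagonals dictionary the paper uses: the bijection $\beta$, the count $\ell(Q'_{n,k})=|\Gamma_{n,k}|$, the identification of $\pink$ as the longest element of the parabolic subgroup $\langle s_{k+1},\ldots,s_{n-k-1}\rangle$, the observation that a reduced word for $\pink$ sits as a suffix of $Q'_{n,k}$ so that $\delta(Q'_{n,k})=\pink$, and the reduction of the theorem to the biconditional ``$D$ contains no $(k+1)$-crossing iff $Q'_{n,k}\setminus\beta(D)$ contains a reduced word for $\pink$'' are all fine. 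The genuine gap is that this biconditional, which you yourself flag as ``the main obstacle,'' is the entire content of the theorem, and you prove it in neither direction; both of your strategies stop exactly where the work begins. In the paper this step is carried by two substantial inputs for which your proposal offers no substitute: for one direction, the Pilaud--Santos $k$-star theorem (Theorem~\ref{th:stars}) is what shows that the elbow/crossing picture of a $k$-triangulation is a \emph{reduced} pipe dream for $\pink$ (the $n-2k$ inner pipes are exactly the $k$-stars, each with $n-2k-1$ crossings, so any two of them cross exactly once); for the other, the antidiagonal property of reduced pipe dreams (\cite[Theorem~B]{KM2005}, or \cite[Theorem~3]{JM2008}) together with Theorem~\ref{th:dim} and the count of $\binom{n-2k}{2}$ crossings shows that every reduced pipe dream for $\pink$ arises from a $k$-triangulation. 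Note also that your one-line sketch of the forward direction aims at the wrong statement: non-reducedness of the complement word is not the obstruction (for any proper face the complement is non-reduced anyway); what must be shown is that a $(k+1)$-crossing in $D$ prevents the complement from \emph{containing} a reduced word for $\pink$, i.e.\ forces two of the pipes $k+1,\ldots,n-k$ to fail to cross --- and that is precisely the cited antidiagonal theorem.

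Your fallback flip argument has the same hole, relocated. Purity in equal dimensions, one common facet, and connectivity of both dual graphs do not force two pure complexes to coincide; what you need at every step is that the flip of a common facet $F$ at a vertex $v$ inside $\Dnk$ produces the \emph{same} new facet as the unique subword-complex exchange of $F$ at $v$, and this local claim is again the biconditional, now localized at a ridge (proving it would require the description of flips through $k$-stars from \cite{PS2009} on one side and the pipe-dream exchange on the other). You would also be importing flip-connectivity of $k$-triangulations, itself a nontrivial theorem \cite{Nak2000}. Your closing deduction of Statement~\ref{th:vertex-decomposable} from $\delta(Q'_{n,k})=\pink$ together with Theorems~\ref{th:shellable} and~\ref{th:sphere} is correct and is exactly the paper's proof of Corollary~\ref{cor:1}, but it remains conditional on the unproved main claim.
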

		In Example~\ref{ex:pentagon}, the case of $n=5$ and $k=1$ is discussed with $Q = Q_{5,1}$ and $Q' = Q'_{5,1}$.
		\begin{corollary}\label{cor:1}
			Statement~\ref{th:vertex-decomposable} holds.
		\end{corollary}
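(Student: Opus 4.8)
The plan is to combine Theorem~\ref{th:main} with the two Knutson--Miller theorems on subword complexes. By Theorem~\ref{th:main} we may identify $\Dnk$ with the subword complex $\Delta(Q'_{n,k}, \pink)$, and Theorem~\ref{th:shellable} then yields at once that $\Dnk$ is pure and vertex-decomposable (and hence shellable). The only remaining assertion of Statement~\ref{th:vertex-decomposable} is that $\Dnk$ is a triangulated \emph{sphere} rather than merely a ball; by Theorem~\ref{th:sphere} this amounts to verifying the single identity $\delta(Q'_{n,k}) = \pink$ for the Demazure product.

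First I would record the explicit shape of the word. Writing $m := n-k$, the word $Q_{n,k}$ is the staircase reduced word for the longest element of $\S{m}$, and deleting every letter $s_i$ with $i \leq k$ leaves a word $Q'_{n,k}$ all of whose letters lie in $\{ s_{k+1}, \ldots, s_{m-1} \}$. On the other side, $\pink$ fixes $1, \ldots, k$ and reverses the block $\{ k+1, \ldots, m \}$, so it is precisely the longest element $w_0'$ of the parabolic subgroup $W' := \langle s_{k+1}, \ldots, s_{m-1} \rangle$.

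The identity $\delta(Q'_{n,k}) = \pink$ then follows from two observations about the Demazure product, which I recall is the Bruhat-maximum of all permutations admitting a reduced word as a subword of the given word. For the containment $\delta(Q'_{n,k}) \leq \pink$ I would note that the greedy recursion defining $\delta$ only ever right-multiplies by letters occurring in $Q'_{n,k}$; since these all lie in $W'$, so does $\delta(Q'_{n,k})$, and $w_0' = \pink$ is the Bruhat-maximal element of $W'$. For the reverse inequality $\delta(Q'_{n,k}) \geq \pink$ I would exhibit an explicit reduced word for $\pink$ inside $Q'_{n,k}$: after deletion, the first $k+1$ blocks of the staircase all collapse to the single run $s_{m-1} \cdots s_{k+1}$, so keeping just one of these together with the shortening tail blocks recovers exactly the staircase reduced word $(s_{m-1} \cdots s_{k+1})(s_{m-1} \cdots s_{k+2}) \cdots (s_{m-1})$ of $w_0'$, which has length $\binom{m-k}{2} = \l(\pink)$. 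Combining the two inequalities gives equality, and Theorem~\ref{th:sphere} finishes the proof.

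The only genuine content is the Demazure computation, and the main point to get right is the bookkeeping that deleting $s_1, \ldots, s_k$ from the staircase word simultaneously keeps us inside the parabolic $W'$ and still leaves a full reduced word for its longest element. Once $Q'_{n,k}$ is written out blockwise this is transparent, so I expect no serious obstacle.
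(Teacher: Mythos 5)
Your proposal is correct and follows essentially the same route as the paper: vertex-decomposability comes from Theorem~\ref{th:main} combined with Theorem~\ref{th:shellable}, and the sphere property reduces to $\delta(Q'_{n,k}) = \pink$, proved by the same two inequalities --- the parabolic-subgroup argument for $\delta(Q'_{n,k}) \leq \pink$ and the explicit staircase reduced word for $\pink$ sitting as a suffix of $Q'_{n,k}$ for $\delta(Q'_{n,k}) \geq \pink$ (the paper cites Lemma~3.4(i) of Knutson--Miller for this step, which is exactly the characterization of the Demazure product you invoke).
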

		\begin{corollary}\label{cor:2}
			The number of facets in $\Dnk$ is given by the determinantal expression in Theorem~\ref{th:number}.
		\end{corollary}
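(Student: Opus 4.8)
The plan is to re-derive the enumeration entirely inside the subword-complex framework, recovering C.~Krattenthaler's count of fans of nonintersecting Dyck paths as the facet count of $\Delta(Q'_{n,k},\pink)$. By Theorem~\ref{th:main} we have $\Dnk = \Delta(Q'_{n,k},\pink)$, so the number of $k$-triangulations equals the number of facets of this subword complex, and by Theorem~\ref{th:shellable} all of them have the same cardinality $\l(Q'_{n,k})-\l(\pink)$, so no purity issue reappears. By definition a facet is a subword of $Q'_{n,k}$ whose complement is a reduced word for $\pink$, so the facets are in bijection with the reduced subwords of $Q'_{n,k}$ spelling $\pink$. First I would record this reformulation and pass to the pictorial language of pipe dreams (RC-graphs).

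Next I would set up the pipe dream picture. The triangulated word $Q_{n,k}$ fills the staircase region associated with $\pi_{n-k,0}=[n-k,\ldots,1]$, each letter occupying one cell; a reduced subword is encoded by placing a crossing tile \textcross\ at its letters and an elbow tile \textelbow\ at the remaining cells, and a facet of $\Delta(Q'_{n,k},\pink)$ is then the complement of such a crossing pattern. Deleting the letters $s_1,\ldots,s_k$ to pass from $Q_{n,k}$ to $Q'_{n,k}$ simply forbids crossings in the corresponding corner cells, i.e. it restricts attention to pipe dreams supported on a truncated staircase. Since $\pink$ fixes $1,\ldots,k$ and reverses $k+1,\ldots,n-k$, the heart of the argument is to read off the pipes of such a pipe dream and observe that the $k$ pipes forced to travel across the reversed block trace out a family of $k$ lattice paths, and that \emph{reducedness} (equivalently, that the two pipes of any given pair cross at most once) is exactly the condition that these paths are \emph{nonintersecting}. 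This step should produce a bijection between the facets of $\Delta(Q'_{n,k},\pink)$ and Krattenthaler's fans of $k$ nonintersecting Dyck paths, with source and sink sequences $A_1,\ldots,A_k$ and $B_1,\ldots,B_k$ determined by the $k$ fixed points and the shape of the truncated staircase.

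Having the bijection, I would finish by a standard Lindstr\"om--Gessel--Viennot argument. A single lattice path from $A_i$ to $B_j$ in the relevant region is counted by a Catalan number, and a short computation of the allowed endpoints shows this number to be $\Cat_{n-i-j}$ for $1\le i,j\le k$; in particular the entry at $(1,1)$ is $\Cat_{n-2}$, the antidiagonal entries are $\Cat_{n-k-1}$, and the entry at $(k,k)$ is $\Cat_{n-2k}$. The Lindstr\"om--Gessel--Viennot Lemma then expresses the number of nonintersecting families as
$$\det\big(\Cat_{n-i-j}\big)_{1\le i,j\le k},$$
which is precisely the matrix in Theorem~\ref{th:number}; the closed product form $\prod_{1\le i\le j<n-2k}(i+j+2k)/(i+j)$ follows from the usual evaluation of this Catalan--Hankel determinant.

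I expect the main obstacle to be the bijection of the second paragraph: one must set up the correspondence between pipe dreams for $\pink$ on the truncated staircase and fans of nonintersecting Dyck paths carefully enough that reducedness of the complementary subword matches nonintersection of the paths, and that the induced source and sink data are exactly those yielding the Catalan entries $\Cat_{n-i-j}$. Once the endpoints are pinned down, the Lindstr\"om--Gessel--Viennot step and the determinant evaluation are routine, and the whole argument furnishes the promised alternative viewpoint on Krattenthaler's proof of Theorem~\ref{th:number}.
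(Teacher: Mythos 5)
Your first step---identifying the number of $k$-triangulations with the number of facets of $\Delta(Q'_{n,k},\pink)$, hence with the number of reduced pipe dreams for $\pink$---agrees with the paper (Theorem~\ref{th:main} and Theorem~\ref{th:bijection}). But the heart of your argument, the bijection between these pipe dreams and fans of $k$ nonintersecting Dyck paths, is both unproved and mis-sketched, and as described it would fail. The $k$ pipes attached to the fixed points $1,\ldots,k$ of $\pink$ form no inversion with any other pipe, hence pass only through elbow tiles, and their routes are therefore completely forced: they occupy exactly the corner boxes $(r,c)$ with $r+c\le k+1$, which under the filling of the staircase by $Q_{n,k}$ carry precisely the deleted letters $s_1,\ldots,s_k$. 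These pipes carry no information whatsoever, so they cannot trace the $k$ Dyck paths of the fan. The pipes that actually cross the reversed block are the $n-2k$ pipes $k+1,\ldots,n-k$ (not $k$ of them), and for these reducedness says that every pair crosses \emph{exactly once}---$\binom{n-2k}{2}$ crossings in all---which is the opposite of nonintersecting. So neither family of pipe trajectories yields Krattenthaler's fan, and the source/sink data producing the entries $\Cat_{n-i-j}$ has nothing to attach to. Extracting a $k$-fan from a pipe dream for $\pink$ requires a genuinely different mechanism (e.g.\ the correspondence between pipe dreams of a vexillary permutation and flagged tableaux, or Krattenthaler's growth diagrams), and that is exactly the nontrivial content your sketch leaves out.

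For contrast, the paper sidesteps this entirely: by Theorem~\ref{th:bijection} together with \eqref{eq:Schubert}, the facet count equals the principal specialization $\SP{\pink}(1,\ldots,1)$, and the determinantal formula of Theorem~\ref{th:number} is then quoted from \cite[Lemma~1.2]{FK1997}; as the paper's closing remark explains, the hidden engine there is that $\pink$ is vexillary, so $\SP{\pink}$ is a flagged Schur function whose specialization is evaluated via the Lindstr\"om--Gessel--Viennot lemma \cite[Corollary~2.6.10]{Man2001}. Your LGV endgame and the evaluation of the Catalan--Hankel determinant $\det\bigl(\Cat_{n-i-j}\bigr)_{1\le i,j\le k}$ are indeed routine once the nonintersecting paths exist; but the bijective construction you would need is precisely the ``more direct approach'' the paper explicitly defers to the joint work with Serrano, and it is not obtainable by reading off pipe trajectories as you propose.
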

	\section{Proofs}
		In this section we will provide some further background, proofs and several remarks.
		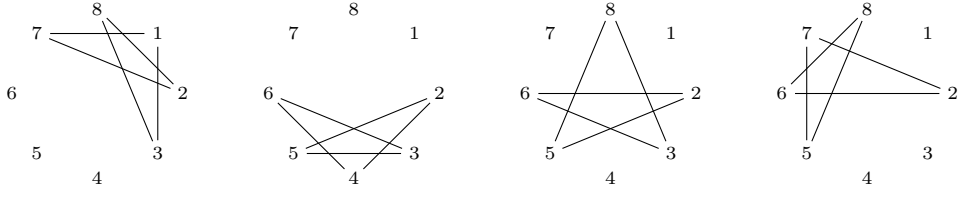
\begin{figure}
			\centering
			\begin{tikzpicture}[scale=0.75]
				\coordinate (c1) at (0,0);
				\coordinate (c2) at (4.5,0);
				\coordinate (c3) at (9,0);
				\coordinate (c4) at (13.5,0);
				\ngon{8}{(c1)}{1.5}
						{3/8/black,8/2/black,2/7/black,7/1/black,1/3/black}
					{5}{{\scriptsize 1},{\scriptsize 2},{\scriptsize 3},{\scriptsize 4},{\scriptsize 5},{\scriptsize 6},{\scriptsize 7},{\scriptsize 8}}{first}{90}
				\ngon{8}{(c2)}{1.5}
						{4/6/black,6/3/black,3/5/black,5/2/black,2/4/black}
					{5}{{\scriptsize 1},{\scriptsize 2},{\scriptsize 3},{\scriptsize 4},{\scriptsize 5},{\scriptsize 6},{\scriptsize 7},{\scriptsize 8}}{first}{90}
				\ngon{8}{(c3)}{1.5}
						{5/8/black,8/3/black,3/6/black,6/2/black,2/5/black}
					{5}{{\scriptsize 1},{\scriptsize 2},{\scriptsize 3},{\scriptsize 4},{\scriptsize 5},{\scriptsize 6},{\scriptsize 7},{\scriptsize 8}}{first}{90}
				\ngon{8}{(c4)}{1.5}
						{6/8/black,8/5/black,5/7/black,7/2/black,2/6/black}
					{5}{{\scriptsize 1},{\scriptsize 2},{\scriptsize 3},{\scriptsize 4},{\scriptsize 5},{\scriptsize 6},{\scriptsize 7},{\scriptsize 8}}{first}{90}
			\end{tikzpicture}
			\caption{Four $2$-stars with $5$ vertices in the $8$-gon.}
			\label{fig:kstar}
		\end{figure}
		One key ingredient is a property of $k$-triangulations discovered by V.~Pilaud and F.~Santos in \cite[Theorem 1.4]{PS2009}. A $k$-star is a polygon on $2k+1$ clockwise ordered different vertices $v_1,\ldots,v_{2k+1}$ of a convex $n$-gon (with the condition that $2k+1 \leq n$) where $v_i$ and $v_j$ are connected if and only if $| i - j | \in \{ k, k+1 \}$, see Figure~\ref{fig:kstar} for four examples.
		\begin{theorem}[Pilaud, Santos] \label{th:stars}
			A $k$-triangulation $T$ together with its boundary $\big\{ \overline{ij} : |i-j| \in \{k,n-k\}\big\}$ contains exactly $n-2k$ $k$-stars. Moreover, every diagonal in $T$ belongs to exactly two $k$-stars, and every diagonal in the boundary belongs to a unique $k$-star.
		\end{theorem}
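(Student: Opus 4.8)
The plan is to split Theorem~\ref{th:stars} into a purely local incidence statement and a global count, and to extract the number $n-2k$ from the former together with Theorem~\ref{th:dim}. Write $B := \{\overline{ij} : |i-j| \in \{k,n-k\}\}$ for the boundary, so the objects of interest are the edges of $T\cup B$. First I would record the combinatorics of a single $k$-star. Reading the indices of $v_1,\dots,v_{2k+1}$ modulo $2k+1$, the rule $|i-j|\in\{k,k+1\}$ joins $v_i$ only to $v_{i+k}$ and to $v_{i-k}=v_{i+k+1}$, so every vertex has degree two and, since $\gcd(k,2k+1)=1$, the star is a single $(2k+1)$-cycle $v_1,v_{1+k},v_{1+2k},\dots$; hence it has exactly $2k+1$ edges, each of the form $\overline{v_p v_{p+k}}$. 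Reading the $2k+1$ vertices in clockwise order, such an edge separates the remaining ones into the $k-1$ vertices $v_{p+1},\dots,v_{p+k-1}$ on one arc and the $k$ vertices $v_{p+k+1},\dots,v_{p+2k}$ on the other; I will call the latter arc the \emph{major side} of the edge.

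Next I would reduce everything to two claims: (A) every diagonal of $T$ is an edge of exactly two $k$-stars of $T\cup B$, and (B) every edge of $B$ is an edge of exactly one $k$-star of $T\cup B$. Granting these, the number of $k$-stars is forced by double counting the incidences between $k$-stars and edges of $T\cup B$. By Theorem~\ref{th:dim} there are $k(n-2k-1)$ diagonals in $T$, while $B$ consists of the $n$ chords of cyclic length $k$ (the $n-k$ pairs with $|i-j|=k$ together with the $k$ pairs with $|i-j|=n-k$). If $S$ is the number of $k$-stars, then, since each star has $2k+1$ edges and each diagonal contributes two incidences and each boundary edge one, counting incidences in two ways gives
\[
	S(2k+1) = 2\,k(n-2k-1) + n = (n-2k)(2k+1),
\]
so that $S=n-2k$. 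Thus the whole theorem follows once (A) and (B) are established.

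The engine for (A) and (B) should be a local existence-and-uniqueness lemma: for an edge $e=\overline{ab}$ of $T\cup B$ and a choice of major side (an arc cut off by $e$ that contains at least $k$ vertices strictly between $a$ and $b$), there is a unique $k$-star of $T\cup B$ having $e$ as an edge with its $k$ far vertices on that side. Given this lemma, (A) holds because an interior diagonal $\overline{ij}$ with $k<|i-j|<n-k$ leaves at least $k$ vertices strictly inside each of its two arcs, so both arcs are admissible major sides and produce two distinct stars; and (B) holds because the short arc of a boundary edge $\overline{i,i+k}$ contains only the $k-1$ vertices $i+1,\dots,i+k-1$, one too few to serve as a major side, so only the long arc is admissible and produces a single star.

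The hard part will be the lemma itself. To build the star I would propagate along its cycle: starting from $e$ and the chosen major side, each successive star vertex $c$ is pinned down as the extremal vertex for which $\overline{bc}\in T\cup B$ and the growing chain of chords stays $(k+1)$-crossing-free, and after $2k$ steps the chain should return to $a$ and close a cycle of length exactly $2k+1$. The delicate point is to show that this next vertex is genuinely single-valued and that the chain closes up correctly rather than stalling or over-running; this is where the two defining properties of a $k$-triangulation must be used in tandem, with $(k+1)$-crossing-freeness preventing branching or an over-long cycle and maximality guaranteeing that the required chord is always present. I expect this to require a careful description, organised by slope, of how the chords of $T$ through a fixed vertex cross $e$. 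A viable alternative, trading this analysis for different bookkeeping, is an induction using flip-connectivity of $k$-triangulations: verify (A) and (B) on one explicit $k$-triangulation whose stars are transparent, and then check that a single flip merges the two stars sharing the removed diagonal and re-splits them across the inserted one, leaving (A) and (B) intact.
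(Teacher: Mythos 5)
First, a point of comparison: the paper does not prove Theorem~\ref{th:stars} at all --- it is imported as a black box from Pilaud and Santos \cite[Theorem~1.4]{PS2009} --- so your proposal must be judged as a self-contained argument rather than against an internal proof. Its skeleton is sound and the reduction is genuinely nice: the observation that a $k$-star is a single $(2k+1)$-cycle (since $\gcd(k,2k+1)=1$) whose every edge has $k$ star vertices on one side and $k-1$ on the other, the count of $n$ boundary edges, and the double count
$$S(2k+1) \;=\; 2\,k(n-2k-1) + n \;=\; (n-2k)(2k+1)$$
using Theorem~\ref{th:dim} correctly derive the number $n-2k$ from your incidence claims (A) and (B). The deduction of (A) and (B) from your existence-and-uniqueness lemma is also correct; in particular the two stars in (A) are distinct because a star has $k$ far vertices on exactly one of the two sides of each of its edges.

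However, there is a genuine gap exactly where you flag ``the hard part'': the existence-and-uniqueness lemma is never proved, and it is not a technical afterthought --- it is the entire mathematical content of the theorem. The propagation scheme you sketch (choose the ``extremal vertex'' $c$ with $\overline{bc}\in T\cup B$ keeping the chain crossing-free, iterate $2k$ times, and hope the chain closes) is not shown to be well defined: you prove neither that such a $c$ exists (this is where maximality of $T$ must enter, and you invoke it only as an expectation), nor that it is unique, nor that the process closes into a cycle of length exactly $2k+1$ instead of stalling or overshooting. This is precisely the part on which Pilaud and Santos spend most of their effort, via angles of the triangulation and bisectors of stars. The fallback through flip-connectivity has the same character: the claim that ``a single flip merges the two stars sharing the removed diagonal and re-splits them across the inserted one'' is a statement about the star structure of $k$-triangulations that is at least as hard as the lemma it is meant to replace (and in \cite{PS2009} flips are themselves analyzed through stars, so one must rely on Nakamigawa's earlier flip theory to avoid circularity, and then still prove how stars transform). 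As it stands, your proposal is a correct and clean reduction of the theorem to an unproven lemma, not a proof.
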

		\begin{figure}
			\centering
			$\begin{array}{ccc}
			  \begin{tikzpicture}[scale=1]
				\coordinate (allo) at (0,0);
				\ngon{8}{(allo)}{2}
						{
						1/3/grey,2/4/grey,3/5/grey,4/6/grey,5/7/grey,6/8/grey,7/1/grey,8/2/grey,
						2/5/black,2/6/black,2/7/black,3/6/black,3/8/black,5/8/black}
					{6}{1,2,3,4,5,6,7,8}{first}{90}
			\end{tikzpicture}
			&\qquad&
			\begin{tikzpicture}[scale=1]
				\boxcollection{0.5}{(0,0)}{
					0/0/$\circ$,1/0/$\circ$,2/0/$+$,3/0/,4/0/$+$,5/0/$\circ$,6/0/$\circ$,
					0/1/$\circ$,1/1/$+$,2/1/,3/1/,4/1/$\circ$,5/1/$\circ$,
					0/2/,1/2/$+$,2/2/$+$,3/2/$\circ$,4/2/$\circ$,
					0/3/,1/3/$+$,2/3/$\circ$,3/3/$\circ$,
					0/4/,1/4/$\circ$,2/4/$\circ$,
					0/5/$\circ$,1/5/$\circ$,
					0/6/$\circ$}
				\content{0.5}{(0,1)}{
					0/0/$1$,1/0/$2$,2/0/$3$,3/0/$4$,4/0/$5$,5/0/$6$,6/0/$7$,
					-1/1/$8$,-1/2/$7$,-1/3/$6$,-1/4/$5$,-1/5/$4$,-1/6/$3$,-1/7/$2$}
				\latticepath{0.5}{(0,-3)}{thick}{
					1/0/black,0/1/black,1/0/black,0/1/black,1/0/black,0/1/black,1/0/black,0/1/black,1/0/black,0/1/black,1/0/black,0/1/black,1/0/black,0/1/black,-7/0/black,0/-7/black}
				\content{0.5}{(0,-3)}{
					0/0/}
			\end{tikzpicture}
			\\
			(a) & & (b) \\
			\\
			\begin{tikzpicture}[scale=1]
				\tpipedream{0.5}{(0,0)}{
					0/0/red/red,1/0/red/green,2/0/green/green,4/0/green/green,5/0/green/red,6/0/red/white,
					0/1/red/green,1/1/green/green,4/1/green/red,5/1/red/white,
					1/2/green/green,2/2/green/green,3/2/green/red,4/2/red/white,
					1/3/green/green,2/3/green/red,3/3/red/white,
					1/4/green/red,2/4/red/white,
					0/5/green/red,1/5/red/white,
					0/6/red/white}
				\cpipedream{0.5}{(0,0)}{
					3/0/green/green,
					2/1/green/green,3/1/green/green,
					0/2/green/green,
					0/3/green/green,
					0/4/green/green}
				\content{0.5}{(0,1)}{
					0/0/$1$,1/0/$2$,2/0/$3$,3/0/$4$,4/0/$5$,5/0/$6$,6/0/$7$,
					-1/1/$8$,-1/2/$7$,-1/3/$6$,-1/4/$5$,-1/5/$4$,-1/6/$3$,-1/7/$2$}
			\end{tikzpicture}
			&\qquad&
			\begin{tikzpicture}[scale=1]
				\tpipedream{0.5}{(0,0)}{
					0/0/black/black,1/0/black/black,2/0/black/black,4/0/black/black,5/0/black/white,
					0/1/black/black,1/1/black/black,4/1/black/white,
					1/2/black/black,2/2/black/black,3/2/black/white,
					1/3/black/black,2/3/black/white,
					1/4/black/white,
					0/5/black/white}
				\cpipedream{0.5}{(0,0)}{
					3/0/black/black,
					2/1/black/black,3/1/black/black,
					0/2/black/black,
					0/3/black/black,
					0/4/black/black}
				\content{0.5}{(0,1)}{
					0/0/$1$,1/0/$2$,2/0/$3$,3/0/$4$,4/0/$5$,5/0/$6$,
					-1/1/$1$,-1/2/$2$,-1/3/$6$,-1/4/$5$,-1/5/$4$,-1/6/$3$}
				\content{0.5}{(0,-2.6)}{
					0/0/}
			\end{tikzpicture}
			\\
				(c) & & (d)
			\end{array}$
			\caption{A $2$-triangulation, its interpretation in terms of the filling of a staircase diagram, the translation into a pipe diagram and the associated reduced pipe dream for $\pi = [1,2,6,5,4,3]$.}
			\label{fig:pipe dream}
		\end{figure}
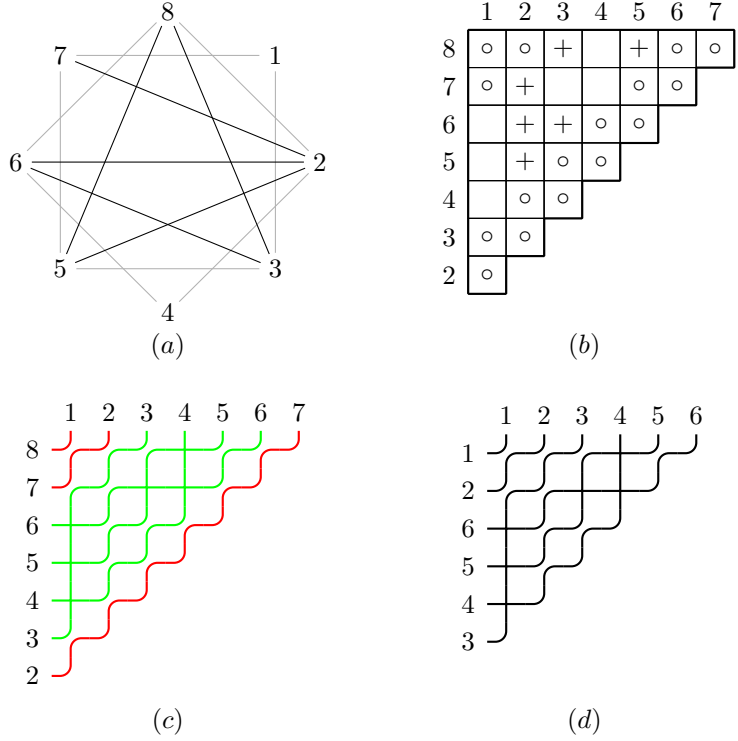
		The next step is to interpret the theorem in terms of fillings of a staircase diagram which can be seen as the Ferrers diagram of the partition $(n-1,n,\ldots,2,1)$. It is well known that a $k$-triangulation $T$ of the $n$-gon can be encoded as a $(+,\circ,\hspace{10pt})$-filling of the staircase diagram, where a box $(i,j)$ is marked with a
		\begin{itemize}
			\item $\circ$ if $\overline{ij}$ is not a diagonal in $\Gamma_{n,k}$,
			\item	$+$ if $\overline{ij}$ is a diagonal in $T$,
			\item and is left blank otherwise.
		\end{itemize}
		Figures~\ref{fig:pipe dream}(a) and (b) show an example, which is the same $2$-triangulation of the $8$-gon as in \cite[Figure~19]{PS2009}. The diagonals in the $2$-triangulation are drawn in black and the $8$ boundary diagonals are added in grey. The four $2$-stars in the $2$-triangulation are exactly the stars shown in Figure~\ref{fig:kstar} given by
		\begin{align*}
			3-8-2-7-1-3, &\quad 4-6-3-5-2-4,\\
			5-8-3-6-2-5, &\quad 6-8-5-7-2-6.
		\end{align*}
		To see the stars in this filling, we replace every $\circ$ and every $+$ by two turning pipes $\textelbow$ and every empty box by two crossing pipes $\textcross$ as shown in Figure~\ref{fig:pipe dream}(c). Now, a resulting pipe starting at the top of column $i$ and ending at the left of row $\pi(i)$ is
		\begin{itemize}
		 \item an \Dfn{outer pipe} if $i \leq k$ or if $i > n-k$ and thus $\pi(i) = n+1-i$, or
		 \item an \Dfn{inner pipe} if $k < i \leq n-k$ and thus $\pi(i) = i$.
		\end{itemize}
		Here, $\pi(i) = n+1-i$ for outer pipes is implied by the fact that they completely consist of turning pipes. Moreover, inner pipes represent exactly the stars in Theorem~\ref{th:stars} and thus consist of $2k+1$ turning and of $n-2k-1$ crossing pieces and connect $i$ on the top with $\pi(i) = i$ on the left. In Figure~\ref{fig:pipe dream}(c), outer pipes are drawn in red and inner pipes in green. The later represent the four $2$-stars in the $2$-triangulation starting in columns $3$ through $6$. Observe moreover, that boxes corresponding to diagonals in $\Gamma_{n,k}$ contain two inner pipes (crossing or turning), while boxes corresponding to the boundary contain one inner and one outer turning pipe, and all other boxes contain two outer turning pipes.

		We have almost reached reduced pipe dreams (or $rc$-graphs) as defined for example in \cite[Section 1.4]{KM2005}. A \Dfn{pipe dream} of size $n$ is a filling of the boxes in the staircase diagram $(n-1,\ldots,1)$ with two crossing pipes $\textcross$ or with two turning pipes $\textelbow$, see Figure~\ref{fig:pipe dream}(d) for an example. The permutation $\pi(D)$ of a pipe dream $D$ is obtained by writing the integers $1$ up to $n$ which appear on top again at the end of the associated pipe and read the permutation from top left to bottom left. For example, the permutation for the pipe dream in Figure~\ref{fig:pipe dream}(d) is $[1,2,6,5,4,3]$. A pipe dream is \Dfn{reduced} if two pipes cross at most once. For a given permutation $\pi$, denote the set of all reduced pipe dreams for $\pi$ by $\RP(\pi)$. Reduced pipe dreams play a central role in the combinatorics of \Dfn{Schubert polynomials}, which can be defined as
		\begin{align}
			\SP{\pi}(x_1,\ldots,x_n) = \sum_{ D \in \RP(\pi) } x^D, \label{eq:Schubert}
		\end{align}
		where $x^D := \prod_{(i,j) \in D} x_i$ and where we say $(i,j) \in D$ if $(i,j)$ is filled in $D$ with a $\textcross$. Further background on Schubert polynomials and their combinatorics can be found e.g. in \cite{LS1985,BB1993,FK1997,Man2001}.

		\begin{theorem}\label{th:bijection}
			$k$-triangulations of the $n$-gon are in canonical bijection with reduced pipe dreams for $\pink$.
		\end{theorem}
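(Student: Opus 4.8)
\emph{Proof proposal.}

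The plan is to promote the construction described in the paragraphs above to a pair of mutually inverse maps, and then to verify the two nontrivial points: that a $k$-triangulation yields a \emph{reduced} pipe dream for $\pink$, and conversely that every reduced pipe dream for $\pink$ comes from an honest $k$-triangulation. First I would fix the forward map. Encode a $k$-triangulation $T$ as the $(+,\circ,\text{blank})$-filling of the staircase, replace every $\circ$ and every $+$ by an elbow and every blank by a cross, and delete the boundary region occupied solely by the outer pipes, producing a filling $D(T)$ of the smaller staircase. Since the $\circ$-boxes are forced and the $+$-boxes record exactly the diagonals of $T$, the map $T \mapsto D(T)$ is manifestly injective: the crosses of $D(T)$ are precisely the boxes of $\Gamma_{n,k}$ that are \emph{not} diagonals of $T$.

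Next I would check that $D(T)\in\RP(\pink)$. For the underlying permutation I would follow the pipes using Theorem~\ref{th:stars}: the outer pipes consist entirely of elbows and hence realize the reflections $i\mapsto n+1-i$, while each inner pipe traces a $k$-star and returns to its own row, so that after deleting the outer boundary and reindexing the routing becomes $\pink$. For reducedness I would argue by a crossing count rather than by inspecting pairs of pipes: by Theorem~\ref{th:stars} each of the $n-2k$ inner pipes passes through exactly $n-2k-1$ crosses, and every cross lies on two inner pipes, so $D(T)$ has exactly $\tfrac12(n-2k)(n-2k-1)=\binom{n-2k}{2}$ crosses. Since $\l(\pink)=\binom{n-2k}{2}$, and since any pipe dream has at least $\l(\pi(D))$ crosses with equality if and only if it is reduced, $D(T)$ is a reduced pipe dream for $\pink$.

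It remains to prove surjectivity, which I expect to be the main obstacle. Given $D\in\RP(\pink)$, I would reverse the rule: declare a box of $\Gamma_{n,k}$ to be a diagonal of $T(D)$ exactly when it carries an elbow, so that the crosses of $D$ become the non-diagonals. Reducedness forces $D$ to have $\binom{n-2k}{2}$ crosses, whence $T(D)$ has $|\Gamma_{n,k}|-\binom{n-2k}{2}=k(n-2k-1)$ diagonals, the correct cardinality by Theorem~\ref{th:dim}. The delicate point is that $T(D)$ must contain no $(k+1)$-crossing, i.e.\ that it is a genuine $k$-triangulation and not merely a set of the right size. The natural route is to establish the equivalence ``$T(D)$ has a $(k+1)$-crossing $\iff$ two pipes of $D$ cross twice'', so that reducedness of $D$ rules out $(k+1)$-crossings directly; alternatively one invokes the converse half of the Pilaud--Santos star description, reading off from the inner pipes of $D$ a decomposition of $T(D)$ into $n-2k$ $k$-stars covering every diagonal twice and every boundary edge once, which characterizes $k$-triangulations.

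Finally I would observe that the two maps are mutually inverse, which is immediate once both are well defined, since each is governed by the same rule ``crosses $\leftrightarrow$ non-diagonals''; this yields the asserted canonical bijection. The crux throughout is the passage from the geometric condition (absence of a $(k+1)$-crossing) to the combinatorial one (reducedness of the pipe dream): Theorem~\ref{th:stars} supplies one direction cleanly, and the reverse implication is where the real work lies.
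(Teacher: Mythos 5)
Your forward direction is essentially the paper's argument: the construction is the same, and your reducedness count (exactly $\binom{n-2k}{2}=\l(\pink)$ crossings, together with the standard fact that a pipe dream with exactly $\l(\pi(D))$ crossings is reduced) is a legitimate variant of the paper's observation that each inner pipe carries only $n-2k-1$ crossings while having to cross each of the $n-2k-1$ other inner pipes. The cardinality count in your surjectivity step ($|\Gamma_{n,k}|-\binom{n-2k}{2}=k(n-2k-1)$, then Theorem~\ref{th:dim}) also matches the paper.

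The gap is precisely the step you flag as ``where the real work lies'', and neither of your two suggested routes closes it. What must be shown is that for arbitrary $D\in\RP(\pink)$ the elbow tiles contain no $(k+1)$-crossing, equivalently (after the translation of $(k+1)$-crossings into strict north-east chains inside maximal rectangles, which the paper takes from \cite{J2005}) no such chain of $k+1$ elbows. The paper does not prove this from scratch: it invokes \cite[Theorem~B]{KM2005} (equivalently \cite[Theorem~3]{JM2008}), which asserts that the crossing tiles of a pipe dream for $\pink$ meet every strict north-east chain of length $k+1$. This is a genuinely hard external theorem---the combinatorial face of the antidiagonal Gr\"obner degeneration of matrix Schubert varieties---and your route (a) is exactly this statement, asserted rather than proved. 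Note also that as a biconditional your route (a) is false: a non-reduced pipe dream for $\pink$ (so with two pipes crossing twice) still has no $(k+1)$-chain among its elbows, by that same theorem; only the one implication you need is true, and it is the deep one. Your route (b) has a different problem: for an arbitrary $D\in\RP(\pink)$ the elbows along an inner pipe do form a closed polygon with $2k+1$ edges (consecutive elbows share a row or a column, and the pipe returns to the vertex it started from), but nothing yet says this polygon is a $k$-star; for $k\geq 2$ a closed polygon with $2k+1$ edges need not be one, and proving that these particular polygons are $k$-stars is essentially equivalent to the theorem you are trying to prove. Moreover, the ``converse half'' of the Pilaud--Santos description that you invoke is not part of Theorem~\ref{th:stars} and would need its own justification. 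So your proposal correctly isolates the crux but leaves it open; to complete it along the paper's lines you must bring in the Knutson--Miller antidiagonal theorem (or supply an independent proof of it for $\pink$).
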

		\begin{proof}
			We have already constructed a pipe dream $D$ from a $k$-triangulation in a canonical way. We have seen that the outer pipes force $\pi(D)$ to fix $i$ for $i \notin \{k+1,\ldots,n-k\}$ as we read the permutation in one-line notation from top to bottom. For the inner pipes, we obtain that $\pi(D)$ maps $i$ to $n+1-i$ for $i \in \{k+1,\ldots,n-k\}$. To see that $D$ is reduced, observe that every inner pipe contains $2k+1$ turning and $n-2k-1$ crossing pipes. As there are $n-2k$ inner pipes and any two must cross, this implies that no two pipes can cross twice. Lastly, we have deleted the outer $k-1$ outer pipes in the diagram as they do not contribute to the reduced pipe dream, compare Figures~\ref{fig:pipe dream}(c) and (d).

			It is left to show that all pipe dreams for $\pink$ arise that way. Theorem~\ref{th:dim} implies that every collection of $k(n-2k-1)$ diagonals in $\Gamma_{n,k}$ not containing a $(k+1)$-crossing is a $k$-triangulation. A $(k+1)$-crossing in the $n$-gon translates to a strict north-east chain of $(k+1)$ $+$'s in the staircase diagram which is completely contained in a maximal rectangle inside the staircase diagram, see \cite{J2005}. E.g., the marked boxes $(3,6)$ and $(5,8)$ in Figure~\ref{fig:pipe dream}(b) form a strict north-east chain of length $2$. But adding the box $(2,5)$ does not turn it into a strict north-east chain of length $3$, as the box $(5,5)$ is not contained in the staircase diagram. The corresponding diagonals in Figure~\ref{fig:pipe dream}(a) have the same behavior with respect to crossings.
			
			Thus, any filling with $\circ$'s in boxes $(i,j)$ for $\overline{ij} \notin \Gamma_{n,k}$ and $+$'s in $k(n-2k-1)$ boxes, which does furthermore not contain such a strict north-east chain of length $k+1$ comes from a $k$-triangulation. It follows from \cite[Theorem~B]{KM2005} that the collection of turning pipes in a reduced pipe dream for $\pink$ does not contain such a strict north-east chain of length $k+1$. This can be found as well in \cite[Theorem~3]{JM2008}, where it is shown that the collection of crossing pipes in a pipe dream for $\pi_{n,k}$ intersects every strict north-east chain (called \emph{antidiagonal} in \cite{JM2008}) of length $k+1$. Thus, the collection of turning pipes cannot contain a strict north-east chain of length $k+1$.

			As a reduced pipe dream for $\pi_{n,k}$ contains $\binom{n-2k}{2}$ boxes with crossing pipes and thus $k(n-2k-1)$ boxes with turning pipes inside $\Gamma_{n,k}$, the statement follows.
		\end{proof}

		We are now in the position to prove the main theorem of this paper.
		\begin{proof}[Proof of Theorem~\ref{th:main}]
			To every box in the staircase diagram, one can associate a simple generator by filling the diagram with the triangulated reduced word $Q_{n,k}$ row by row from top to bottom and from right to left. One obtains a reduced word for some $\pi$ by reading the simple generators in boxes filled with $\textcross$'s in a reduced pipe dream for $\pi$. In other words, the pipe dream can be seen as a \emph{braid} for $\pi$. For example, the reduced word obtained from the pipe dream in Figure~\ref{fig:pipe dream}(d) reads $s_4 | s_5 s_4 | s_3  | s_4 | s_5$, where a separator $|$ indicates that we have started reading the next row from right to left. We have thus a one-to-one correspondence between subwords of $Q_{n,k}$ which are reduced words for $\pi$ and reduced pipe dreams for $\pi$. By the canonical bijection in Theorem~\ref{th:bijection} and the definition of subword complexes, we can delete all letters in $Q$ not contained in $\Gamma_{n,k}$ and obtain that $\Delta_{n,k}$ is equal to $\Delta(Q_{n,k}', \pink)$, as desired.
		\end{proof}		
		\begin{proof}[Proof of Corollary~\ref{cor:1}]
			The fact that $\Dnk$ is vertex-decomposable follows immediately from the main theorem together with Theorem~\ref{th:shellable}. To prove that $\Dnk$ is moreover a triangulated sphere, we use that $\pink$ is the longest element in the standard parabolic subgroup generated by the generators in $Q'_{n,k}$, and therefore $\delta(Q_{n,k}') \leq \pi_ {n,k}$ in Bruhat order. As $Q'_{n,k}$ contains a reduced expression for $\pink$ as a suffix, \cite[Lemma~3.4(i)]{KM2004} tells us that $\delta(Q'_{n,k}) \geq \pink$. By Theorem~\ref{th:sphere}, this completes the proof.
		\end{proof}
		\begin{corollary}
			The number of facets in $\Dnk$ is given by the Schubert polynomial $\SP{\pink}$ evaluated at $1$. Moreover, Corollary~\ref{cor:2} holds.
		\end{corollary}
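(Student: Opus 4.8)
The plan is to assemble the final corollary from pieces already in place. The statement has two parts: first, that the number of facets of $\Dnk$ equals $\SP{\pink}(1,\ldots,1)$; and second, that this reproves Corollary~\ref{cor:2}, i.e.\ the determinantal formula of Theorem~\ref{th:number}. The first part is essentially immediate. By Theorem~\ref{th:bijection}, $k$-triangulations are in bijection with reduced pipe dreams for $\pink$, so the number of facets of $\Dnk$ is $|\RP(\pink)|$. Specializing the defining equation~\eqref{eq:Schubert} at $x_1 = \cdots = x_n = 1$ gives $\SP{\pink}(1,\ldots,1) = \sum_{D \in \RP(\pink)} 1 = |\RP(\pink)|$, since each monomial $x^D$ evaluates to $1$. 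This proves the first sentence with no real work.

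The substance is the second sentence. For this I would evaluate $\SP{\pink}(1,\ldots,1)$ explicitly and match it to the determinant in Theorem~\ref{th:number}. The permutation $\pink = [1,\ldots,k,\,n-k,n-k-1,\ldots,k+1]$ fixes the first $k$ letters and then reverses the block $\{k+1,\ldots,n-k\}$; thus it is a \emph{dominant} (or at least a vexillary/Grassmannian-type) permutation whose Schubert polynomial is governed by a single partition shape. I would identify the relevant partition $\lambda$ from the code of $\pink$, use the known fact that for such permutations the Schubert polynomial is a flagged Schur polynomial (equivalently, $\SP{\pink} = s_\lambda$ in the appropriate variables for a Grassmannian permutation), and then specialize at $1$. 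The specialization of a (flagged) Schur polynomial at all variables equal to $1$ counts semistandard tableaux of shape $\lambda$ with bounded entries, which is computed by a determinant of binomial coefficients via the Lindström--Gessel--Viennot lemma applied to nonintersecting lattice paths.

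The key step, and the main obstacle, is to verify that the resulting determinant of binomial coefficients coincides with the Catalan determinant
$$
\det\big( \Cat_{n-1-i-j} \big)_{1 \le i,j \le k}
= \prod_{1 \le i \le j < n-2k} \frac{i+j+2k}{i+j}.
$$
This is exactly the identification that Krattenthaler's proof establishes combinatorially via fans of nonintersecting Dyck paths (the families of lattice paths underlying the Lindström--Gessel--Viennot evaluation of Theorem~\ref{th:number}). So rather than recompute the product formula, I would argue that the lattice-path families enumerated by $\SP{\pink}(1,\ldots,1)$ are in bijection with, or literally the same as, the fans of nonintersecting Dyck paths that Krattenthaler counts, thereby reducing the claim to Theorem~\ref{th:number} itself. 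Concretely, I expect each reduced pipe dream for $\pink$ to correspond to such a path family, so that $\SP{\pink}(1,\ldots,1)$ and the determinant count the same objects; the bulk of the work is pinning down this correspondence precisely and checking the index ranges ($1 \le i \le j < n-2k$) match the shape $\lambda$ read off from $\pink$.
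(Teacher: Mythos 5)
Your first sentence is proved exactly as in the paper: Theorem~\ref{th:bijection} identifies facets of $\Dnk$ with reduced pipe dreams for $\pink$, and specializing \eqref{eq:Schubert} at $x_1=\cdots=x_n=1$ gives that this number is $\SP{\pink}(1,\ldots,1)$. No issues there.

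The second part has a genuine gap, located precisely at the step you choose to defer. Your plan ends by ``reducing the claim to Theorem~\ref{th:number} itself,'' i.e.\ by invoking the Jonsson--Krattenthaler formula as a black box to identify your lattice-path determinant with the Catalan determinant. But Corollary~\ref{cor:2} is \emph{trivially} equivalent to Theorem~\ref{th:number}: the facets of $\Dnk$ are by definition the $k$-triangulations, so once you allow yourself to cite Theorem~\ref{th:number}, the corollary follows with no Schubert theory at all and nothing new has been established. The whole point of this corollary (see the abstract and introduction) is to give an \emph{independent} derivation of the determinantal formula, so any argument routed through Theorem~\ref{th:number} is circular with respect to that goal. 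The paper instead finishes the computation on the Schubert side: it cites \cite[Lemma~1.2]{FK1997} for the evaluation of the principal specialization $\SP{\pink}(1,\ldots,1)$, which yields the counting formula directly. The mechanism you sketch (vexillary $\Rightarrow$ flagged Schur function $\Rightarrow$ Lindstr\"om--Gessel--Viennot determinant) is exactly the one the paper outlines in its closing remark, but to turn it into a proof you must actually carry out the identification of that determinant with $\det\big(\Cat_{n-i-j}\big)_{1\leq i,j\leq k}=\prod_{1\leq i\leq j<n-2k}\tfrac{i+j+2k}{i+j}$, rather than outsource it to the theorem being reproved. Two smaller corrections: $\pink$ is vexillary but (for $n>2k+2$) neither dominant nor Grassmannian --- its code $(0^k,n-2k-1,n-2k-2,\ldots,1,0)$ is not weakly decreasing and it has $n-2k-1$ descents --- so only the flagged Schur route is available, not the ordinary Schur-function one; and the determinant entries are $\Cat_{n-i-j}$, not $\Cat_{n-1-i-j}$.
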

		\begin{proof}
			The fact that $k$-triangulations are counted by $\SP{\pi}(1,\ldots,1)$ follows from Theorem~\ref{th:bijection}, together with the combinatorial description of Schubert polynomials as shown in \eqref{eq:Schubert}. The counting formula then follows with \cite[Lemma~1.2]{FK1997}.
		\end{proof}
		\begin{remark}
			A key step in the proof that $\SP{\pi}(1,\ldots,1)$ is counted by the determinantal formula is that $\pi$ is vexillary and thus, $\SP{\pi}$ equals a \emph{flagged Schur function}, see \cite[Corollary~2.6.10]{Man2001}. This proof is related to C.~Krattenthaler's proof of the determinantal formula in \cite{Kra2006} in the sense that it is as well \emph{not} bijective (as it involves a non-bijective induction) and that it uses the Lindström-Gessel-Viennot Lemma on families of nonintersecting lattice paths. Using a more direct approach, one can obtain a purely bijective construction. This will be treated in a joint paper with L.~Serrano. We will as well show how most of the properties of $k$-triangulations can be reobtained only using the combinatorics of Schubert polynomials.
		\end{remark}
	\bibliographystyle{amsalpha} 
	\bibliography{../mrabbrev,../bibliography}
\end{document}